\newtheorem{theo}{Theorem}[section]
\newtheorem{lemma}[theo]{Lemma}
\newtheorem*{lem*}{Lemma}
\newcommand{\cc}{{\mathbb{C}}}                                     % complex numbers
\newcommand{\nn}{{\mathbb{N}}}                                     % natural numbers
\newcommand{\rr}{{\mathbb{R}}}                                     % real numbers
\newcommand{\In} {{\subseteq}}                                     % subset
\newcommand{\comment}[1]{}                                         % insert a large comment
\newenvironment{aufg}[1]{\noindent \textbf{Aufgabe #1. }}
\begin{document}

\title[On the Kuratowski embedding]{On the finite-dimensional approximation of the Kuratowski embedding for compact Riemannian manifolds}

\author[Malte R\"oer] {Malte R\"oer \\
       Karlsruher Institut f\"ur Technologie \\
        Institut f\"ur Algebra und Geometrie \\
       Kaiserstrasse 89-93 \\
       76133 Karlsruhe \\
       Germany 
        }

\date{}

\begin{abstract}
In the proof of his systolic inequality, Gromov uses an isometric embedding of a Riemannian manifold $M$ into the Banach space of bounded functions on $M$, the so-called Kuratowski-embedding. Subsequently, it was shown by different authors that the Kuratowski-embedding can be approximated by bi-Lipschitz embeddings into finite-dimensional Banach spaces. We give a detailed proof for the existence of such finite-dimensional approximations along the lines suggested in \cite{MR2299729} and go on to discuss quantitative aspects of the problem, establishing for the dimension of the Banach space a bound which depends on curvature properties of the manifold.

%We give an elementary proof of a Lemma due to Larry Guth: Let $(M,g)$ be a closed Riemannian manifold and let $\epsilon > 0$. Then there is a finite subset $S \subset M$ so that for all $x,y \in M$ there is an $s \in S$ with 
%\begin{equation*}
%(1- \epsilon) \cdot d^g(x,y) \leq | d^g(x,s) - d^g(y,s)| \leq d^g(x,y), 
%\end{equation*}
%where $d^g$ is the metric induced on $M$ by the Riemannian metric $g$. We go on to establish an upper bound for the size of the smallest subset $S \subset M$ for which the lemma is valid.
\end{abstract}

\maketitle

\section{Introduction}
In \cite{MR697984}, Gromov proves his systolic inequality, showing that the least length of a non-contractible loop in an essential Riemannian manifold $M$ is bounded from above in terms of $\operatorname{Vol}(M)$ and a universal constant. In his proof, Gromov makes use of the isometric embedding
\begin{equation*}
M \rightarrow L^{\infty}(M) \, , \, m \mapsto \bigl( x \mapsto d(x,m) \bigr),
\end{equation*}
where $d$ is the metric induced on $M$ by the Riemannian metric. This is the so-called \emph{Kuratowski-embedding}. In \cite{MR2299729}, Guth states a lemma saying that the Kuratowski-embedding of a compact Riemannian manifold can be approximated by bi-Lipschitz-embeddings into finite-dimensional Banach spaces. Guth sketches a proof, which uses Toponogov's Theorem to show that for a compact Riemannian manifold $M$ and $\epsilon > 0$ there is a finite subset $S \subset M$ with the following property. For $x,y \in M$ there is an $s \in S$ with 
\begin{equation}
(1-\epsilon) \cdot d(x,y) \leq | d(x,s) - d(y,s)| \leq d(x,y).
\end{equation}
The first complete proof of this result appears in  \cite{MR2753701}. The proof given there, which involves the injectivity radius, the first variation formula and the mean value theorem, does not proceed along the lines suggested by Guth.

Another proof can be found in \cite{MR2803853}.

The purpose of this note is to give a detailed version of the proof for the approximation lemma sketched by Guth, and to pursue the following question: What can be said about the size of $S$, i.e., about the dimension of the Banach space $l^{\infty}(S)$ into which $M$ embeds with Lipschitz constants $1-\epsilon$ and $1$?
Given $\epsilon >0$ and a finite subset $S$ of a compact metric space $(X,d)$, we say that $S$ is \emph{$\epsilon$-good} if for any pair $(x,y) \in X \times X$ there is an $s \in S$ so that the estimate in equation (1) is valid, i.e., if for all $x,y \in X$ there is an $s \in S$ so that 
\begin{equation*}
(1- \epsilon) \cdot d(x,y) \leq | d(x,s) - d(y,s) |. 
\end{equation*}
%Note that the inequality
%\begin{equation*}
%|d(x,z) - d(y,z)| \leq d(x,y) 
%\end{equation*}
%is satisfied by any three points $x,y,z \in X$ as a consequence of the triangle inequality, we therefore don't include it in our definition of an $\epsilon$-good set. 
Now define
\begin{equation*}
\kappa(X,d,\epsilon) := \operatorname{inf} \Bigl \{ |S| : S \subset X \mbox{ is $\epsilon$-good } \Bigr \}.
\end{equation*}
When $X$ is a manifold $M$ and $d$ is induced by a Riemannian metric $g$ on $M$, we also write $\kappa(M,g,\epsilon)$ for $\kappa(M, d, \epsilon)$.  
The main result of this paper is the establishment of an upper bound for $\kappa(M,g, \epsilon)$ where $(M,g)$ is a compact Riemannian manifold. In fact, let 
\begin{equation*}
K(M,g) := \operatorname{inf} \Bigl \{ r \geq \frac{1}{100} : \operatorname{sec}(g) \leq r \Bigr \},
\end{equation*}
where $\operatorname{sec}(g)$ is the sectional curvature of $g$. Then we prove the following theorem: 
\begin{theo}
Let $(M,g)$ be a compact Riemannian manifold of dimension $n$ and let $\epsilon \in \bigl(0,\frac{4}{5\pi}\bigr)$. Then there is an $\epsilon$-net $S$ in $M$, and we have the following estimate 
\begin{equation*}
\kappa(M,g,\epsilon) \leq C(n) \cdot \operatorname{Vol}(M,g)    \cdot K(M,g)^{\frac{n}{2}} \cdot \epsilon^{-n},
\end{equation*}
where $C(n) = n \cdot 20^n \cdot \pi^{\frac{n-1}{2}}  \cdot \Gamma( \frac{n-1}{2})$.
\end{theo}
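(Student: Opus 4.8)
The plan is to produce a single finite set $S\subset M$ which is at the same time an $\epsilon$-net and $\epsilon$-good, namely a maximal $\delta$-separated subset of $M$ for a suitably chosen radius $\delta$, and then to bound $|S|$ by a ball-packing argument. Write $K:=K(M,g)$. Everything rests on two consequences of the curvature bound $\operatorname{sec}(g)\le K$. First, by Rauch's comparison theorem the conjugate radius of $M$ is at least $\pi/\sqrt{K}$, so for every $x$ and every $\rho<\pi/\sqrt{K}$ the map $\exp_x$ is a nonsingular immersion on the ball $B(0,\rho)\subset T_xM$, and Rauch's estimates bound from below how much $\exp_x$ stretches — in particular how faithfully angles at $x$ are recorded by distances between images. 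Second, Günther's volume inequality provides a lower bound $\operatorname{Vol}(B(x,r))\ge v_K(r)$, where $v_K(r)$ is the volume of an $r$-ball in the simply connected space form of constant curvature $K$, valid for $r$ below the conjugate radius. The radii will be taken of the form $\rho\asymp\sqrt{\epsilon}/\sqrt{K}$ and $\delta\asymp\epsilon/\sqrt{K}$, so that $\delta/\rho\asymp\sqrt{\epsilon}$; this choice is essentially forced by the target, since $\operatorname{Vol}(M,g)\big/\delta^{\,n}$ must reproduce the factor $\operatorname{Vol}(M,g)\cdot K^{n/2}\cdot\epsilon^{-n}$.

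The core is a directional lemma: if $S$ is a $\delta$-net with $\delta$ small enough in terms of $\epsilon$ and $K$, then for every $x\in M$ and every unit vector $v\in T_xM$ there is $s\in S$ together with a minimizing geodesic from $x$ to $s$ whose initial direction makes an angle at most $\theta_0$ with $v$ and with $d(x,s)$ comparable to $\rho$; here $\theta_0$ is the angle determined by $1-\cos\theta_0=\epsilon$, so $\theta_0\asymp\sqrt{\epsilon}$. When $\exp_x$ is injective on $B(0,\rho)$ this is immediate: the point $\exp_x(\rho v)$ lies within $\delta$ of some $s\in S$, so $d(x,s)\in[\rho-\delta,\rho+\delta]$, and because $\exp_x$ is expanding on $B(0,\rho)$ the angle at $x$ between $v$ and the direction to $s$ is $\asymp\delta/\rho\asymp\sqrt{\epsilon}\le\theta_0$.

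Granting the lemma, $\epsilon$-goodness of $S$ follows by distinguishing two cases according to the size of $d(x,y)$. If $d(x,y)\ge 2\rho/\epsilon$, then any $s\in S$ with $d(x,s)\le\delta$ already works, since $|d(x,s)-d(y,s)|\ge d(x,y)-2\,d(x,s)\ge d(x,y)-2\rho\ge(1-\epsilon)d(x,y)$ by the triangle inequality alone. If $d(x,y)<2\rho/\epsilon$, apply the lemma with $v$ the unit vector at $x$ pointing away from $y$ along a minimizing segment $xy$, producing $s$. The hinge at $x$ with sides $d(x,y)$ and $d(x,s)$ and included angle at least $\pi-\theta_0$ then has, by the hinge form of Toponogov's comparison for $\operatorname{sec}(g)\le K$, third side $d(y,s)$ at least as long as the corresponding side of the comparison hinge in the space form of curvature $K$; a short computation with the spherical (to leading order Euclidean) law of cosines yields $d(y,s)\ge d(x,s)+(1-\epsilon)d(x,y)$, using $1-\cos\theta_0=\epsilon$ together with the restriction $\epsilon<\tfrac{4}{5\pi}$, and hence $|d(x,s)-d(y,s)|\ge(1-\epsilon)d(x,y)$.

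It remains to count. A maximal $\delta$-separated $S$ is automatically a $\delta$-net, the balls $B(s,\delta/2)$, $s\in S$, are pairwise disjoint, so $|S|\le\operatorname{Vol}(M,g)\big/\min_{s}\operatorname{Vol}(B(s,\delta/2))$, and by Günther's inequality $\operatorname{Vol}(B(s,\delta/2))\ge v_K(\delta/2)$, which is bounded below by an explicit expression; substituting the chosen $\delta$ and carrying the numerical constants — those coming from $\theta_0\asymp\sqrt{\epsilon}$ in the directional lemma and those in $v_K(\delta/2)$ — produces the stated $C(n)=n\cdot 20^n\cdot\pi^{(n-1)/2}\cdot\Gamma(\tfrac{n-1}{2})$. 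I expect the genuine difficulty to lie entirely in the directional lemma when $M$ has small injectivity radius at $x$: there $\exp_x$ need not be injective on $B(0,\rho)$, the point $\exp_x(\rho v)$ may be far closer to $x$ than $\rho$ along an unrelated minimizing geodesic, and the clean control of the direction to the nearby net point breaks down. Establishing the lemma in that regime from the upper curvature bound alone is the main obstacle, and where the bulk of the careful work must go, whereas the hinge comparison and the packing estimate are comparatively routine.
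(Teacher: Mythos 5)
Your route is genuinely different from the paper's, even though it shares the final ball-packing count. The paper scales $g$ once so that $\operatorname{sec}\le\frac{1}{100}$, works at a fixed \emph{macroscopic} scale — the auxiliary point $z$ sits at distance $5\pi$ from $x$, near the ``equator'' of the model sphere of radius $10$ — and there the spherical law of sines gives an angle bound $\beta\asymp\delta\asymp\epsilon$, much stronger than the $1-\cos\beta\lesssim\epsilon$ that is actually needed; the final inequality then comes from the monotonicity lemma for $\sin(tx)/\sin(x)$, the spherical law of cosines in the model, and Rauch's distance-non-decrease to transfer to $(M,g)$. You instead work at a \emph{small} scale $\rho\asymp\sqrt{\epsilon}/\sqrt{K}$, obtain the just-sufficient angle bound $\beta\asymp\delta/\rho\asymp\sqrt{\epsilon}$, and close via the hinge form of the Rauch/CAT comparison for $\operatorname{sec}\le K$. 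Both schemes use the same $\delta\asymp\epsilon/\sqrt{K}$ for the packing step, so both give the same order of constant; the paper's macroscopic choice makes the model computation explicit and clean, while your small-scale version requires only near-Euclidean asymptotics and is in that sense more elementary.

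However, the proposal has a genuine gap, and you flag it yourself: the directional lemma is only argued when $\exp_x$ is injective on $B(0,\rho)$, and you say that establishing it ``in that regime from the upper curvature bound alone is the main obstacle, and where the bulk of the careful work must go.'' That work is not done here, so what you have is an outline, not a proof. A secondary but real problem of the same nature is the volume step: G\"unther's inequality $\operatorname{Vol}(B(x,r))\ge v_K(r)$ requires $r$ to be below the \emph{injectivity} radius at $x$, not merely the conjugate radius as you assert, since otherwise $\exp_x$ can overcount. Finally, your case split at $d(x,y)\ge 2\rho/\epsilon$ is miscalibrated: the triangle-inequality case only needs $d(x,y)\ge 2\delta/\epsilon\asymp 1/\sqrt{K}$, while your threshold $2\rho/\epsilon\asymp 1/(\sqrt{\epsilon K})$ forces the hinge-comparison case to cover distances that exceed $\pi/\sqrt{K}$ once $\epsilon<1/\pi^2$, outside the range where the CAT-type hinge inequality applies. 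For comparison, the paper handles the injectivity radius issue by invoking Klingenberg's estimate from Petersen's book to claim $\operatorname{inj}(g)\ge 10\pi$ from $\operatorname{sec}(g)\le\frac{1}{100}$ alone; that estimate in fact gives $\operatorname{inj}(p)\ge\min\bigl(\pi/\sqrt{K},\ \tfrac12\,\ell(p)\bigr)$ with $\ell(p)$ the length of the shortest geodesic loop at $p$, so the issue you identify is substantive and not something you can simply wave away by pointing to the curvature bound.
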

As suggested in the appendix of \cite{MR2753701}, our approach is comparison geometry. We study the situation in the model case of a constantly positively curved disk. Then we use curvature bounds and comparison theorems to translate the results to more general Riemannian manifolds. 

I thank Matthias Blank, Mikhail Katz and Roman Sauer for comments and suggestions. 

I also thank Urs Lang, who informed me about a mistake in an earlier version of the text.

%\section{The finite dimensional approximation of the Kuratowski imbedding}
%In this section, we give an elementary proof of the following lemma:
%\begin{lemma}
%Let $(X,d)$ be a compact metric space. Let $\epsilon >0$. Then there is an $\epsilon$-good set $S \subset X$. 
%\end{lemma}
%\begin{proof}
%Consider the function 
%\begin{align*}
%f &: X \times X \times X \rightarrow \rr, \\ & (x,y,z) \mapsto |d(x,z) - d(y,z)| - (1-\epsilon) d(x,y).
%\end{align*}
%The function $f$ is continuous. For each $s \in X$, we obtain an open set 
%\begin{equation*}
%M_s := \bigl \{ (x,y) \in X \times X: f(x,y,s) > 0 \bigr\} \subset X \times X.
%\end{equation*}
%Since each pair $(x,y) \in X \times X$ is contained in $M_x$,  the sets $ \bigl \{ M_s \bigr \}_{ s \in X}$ form an open cover of $X \times X$. Since $X$ is compact, this can be reduced to a finite subcover $M_{s_1}, ... , M_{s_n}$. The set $S := \{s_1, ... , s_n \} \subset X$ is $\epsilon$-good. 
%\end{proof}

\section{Some geometric preparations}
In this section we examine in some detail the special case of a positively curved ball. Let $n \geq 2$ be an integer and let $D^n_{7 \pi} \subset \rr^n$ be the ball of radius $7 \pi$. Let 
\begin{equation*}
h = dr^2 + 100 \cdot \operatorname{sin}^2 \bigl( \frac{r}{10} \bigr) ds_{n-1}^2
\end{equation*}
be the constant curvature metric with $\operatorname{sec} (h) = 1/100$ (in polar coordinates). Let $\epsilon \in (0, \frac{4}{5\pi})$ and let $\delta \leq  \frac{5\pi}{4} \epsilon$. Let $d^h$ be the metric induced on $D^n_{7 \pi}$ by $h$. Let $S \subset D^n_{7 \pi}$ be a finite $\delta$-net in $D^n_{7 \pi}$, i.e., a finite subset so that for each $x \in D^n_{7 \pi}$ there is an $s \in S$ so that $d^h(x,s) < \delta$. We want to show that the $\delta$-net $S$ in $D^n_{7 \pi}$ has the following property: Let $x$ be the center $D^n_{7 \pi}$ and let $y \in D^n_{7 \pi}$ with $d^h(x,y) < 1$. Then there is an $s \in S$ so that 
\begin{equation*}
(1 - \epsilon) \cdot d^h(x,y) <  d^h(y,s) -d^h(x,s). 
\end{equation*}
To establish this, we have to find an $s \in S$ so that the angle $\alpha$ at $x$ in the geodesic triangle $\Delta := (x,y,s)$ is large, i.e., close to $\pi$. We do this in a series of lemmas.
%\begin{lemma}
%Let $t \in (0,1)$. Consider the following function
%\begin{equation*}
%f_t: (0, \frac{\pi}{2}] \rightarrow \rr \, , \, \phi \mapsto \frac{\operatorname{sin}( t \phi ) }%{\operatorname{sin} (\phi) }.
%\end{equation*}
%Then:
%\begin{enumerate}
%\item The function $f_t$ is non-negative and non-decreasing on $(0,\frac{\pi}{2}]$.
%\item The function $f_t$ has a continuous extension to $[0,\frac{\pi}{2}]$. 
%\end{enumerate}
%In particular $f_t(\phi) \leq f_t(\frac{\pi}{2}) = \operatorname{sin} \bigl( t \frac{\pi}{2} \bigr)$ for all %$\phi \in [0,\frac{\pi}{2}]$. 
%\end{lemma}
%\begin{proof}
%We compute the first derivative:
%\begin{equation*}
%f'_t(\phi) = \frac{t \operatorname{cos}(t\phi) \operatorname{sin}(\phi) - \operatorname{sin}(t \phi) %\operatorname{cos}(\phi)}{\operatorname{sin}^2(\phi) }.
%\end{equation*}
%Since
%\begin{align*}
%t \operatorname{cos}(t \phi) \operatorname{sin} (\phi) - \operatorname{sin}(t \phi) \operatorname{cos}(\phi) & %\geq t \bigl( \operatorname{cos}(t \phi) \operatorname{sin}(\phi) - \operatorname{sin}(t \phi) %\operatorname{cos}(\phi) \bigr) \\ & \geq t \bigl( \operatorname{cos}(-t \phi) \operatorname{sin}(\phi) + %\operatorname{sin}(-t \phi) \operatorname{cos}(\phi) \bigr) \\ & \geq t \operatorname{sin} \bigl( (1-t) \phi %\bigr) \geq 0, 
%\end{align*}
%we see that $f_t$ is non-decreasing. The de l'Hopital-rules show that $\frac{\operatorname{sin}(t \phi) } %{\operatorname{sin}(\phi) } \to t $ as $\phi \to 0$. 
%\end{proof}
\begin{lemma}
Let $t \in (0,1)$. Consider the function
\begin{equation*}
f_t: (0, \frac{\pi}{2}] \rightarrow \rr \, , \, x \mapsto \frac{\operatorname{sin}(tx)}{\operatorname{sin}(x)}.
\end{equation*}
Then $f_t$ has a continuous extension to $[0, \frac{\pi}{2}]$ and is non-decreasing. 
\end{lemma}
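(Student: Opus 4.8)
The plan is to treat the continuous extension and the monotonicity separately, both by elementary calculus on the interval $(0,\frac{\pi}{2})$. For the extension, note that as $x \to 0^+$ both numerator and denominator of $f_t(x) = \sin(tx)/\sin(x)$ tend to $0$, so by l'Hôpital's rule (or from the expansions $\sin(tx) = tx + O(x^3)$ and $\sin(x) = x + O(x^3)$) we get $\lim_{x\to 0^+} f_t(x) = t$. Setting $f_t(0) := t$ therefore yields a continuous extension to $[0,\frac{\pi}{2}]$; there is nothing to check at the right endpoint since $\sin(\frac{\pi}{2}) = 1 \neq 0$, so $f_t$ is already continuous and well defined there.

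For the monotonicity, I would differentiate on $(0,\frac{\pi}{2})$ and obtain
\begin{equation*}
f_t'(x) = \frac{t\cos(tx)\sin(x) - \cos(x)\sin(tx)}{\sin^2(x)},
\end{equation*}
so it suffices to show that the numerator $g(x) := t\cos(tx)\sin(x) - \cos(x)\sin(tx)$ is positive on $(0,\frac{\pi}{2})$. Here $g(0) = 0$, and a short computation collapses the derivative to the clean expression $g'(x) = (1-t^2)\,\sin(tx)\,\sin(x)$, which is strictly positive on $(0,\frac{\pi}{2})$ because $0 < t < 1$ and $0 < tx < x \leq \frac{\pi}{2}$ force both sine factors to be positive. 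Hence $g$ is strictly increasing with $g(0) = 0$, so $g > 0$ on $(0,\frac{\pi}{2})$; consequently $f_t' > 0$ there, and combined with continuity on $[0,\frac{\pi}{2}]$ this shows that $f_t$ is (strictly, hence in particular non-decreasingly) increasing on all of $[0,\frac{\pi}{2}]$.

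I do not expect any genuine obstacle here: the only point requiring a small idea is to reduce the sign question to the auxiliary function $g$ and then to observe that $g'$ simplifies to the manifestly positive $(1-t^2)\sin(tx)\sin(x)$. A derivative-free alternative would be to establish the two-point inequality $\sin(tx_1)\sin(x_2) \leq \sin(tx_2)\sin(x_1)$ for $0 < x_1 < x_2 \leq \frac{\pi}{2}$ directly via product-to-sum identities, but the monotonicity-via-$g$ argument above seems the cleaner route and is the one I would write up.
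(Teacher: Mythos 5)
Your argument is correct and follows the same route as the paper's (terse) proof: l'Hôpital for the extension at $0$, and a sign analysis of $f_t'$ for the monotonicity. The paper leaves the derivative computation implicit; your reduction to $g(x) = t\cos(tx)\sin(x) - \cos(x)\sin(tx)$ with $g'(x) = (1-t^2)\sin(tx)\sin(x)$ is exactly the clean way to carry it out.
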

\begin{proof}
The de l'Hopital-rules show that $f_t$ can be extended continuously to $[0, \frac{\pi}{2}]$. One can compute that the derivative $f'_t$ is non-negative on $(0,\frac{\pi}{2}]$. This finishes the proof.
%The de l'Hopital-rules show that $f_t(x) \to t$ as $x \to 0$. To show that $f_t$ is non-decreasing on $[0, \frac{\pi}{2}]$, it is sufficient to show that the derivative $f'_t$ is non-negative on $(0,\frac{\pi}{2}]$ (where it is defined!). For this it is enough to show that
%\begin{align*}
%g_t & :[0,\frac{\pi}{2}] \rightarrow \rr, \\ & x \mapsto t \operatorname{cos}(tx) \operatorname{sin}(x) - \operatorname{sin}(tx) \operatorname{cos}(x)
%\end{align*}
%is non-negative. We have $g_t(0) = 0$. Furthermore, we have
%\begin{align*}
%g_t'(x)  = & -t^2 \cdot \operatorname{sin}(tx) \operatorname{sin}(x) + t \cdot \operatorname{cos}(tx) \operatorname{cos}(x)\\ & - t \operatorname{cos}(tx) \operatorname{cos}(x) + \operatorname{sin}(tx) \operatorname{sin}(x) \\  = & (1-t^2) \operatorname{sin} (tx) \operatorname{sin}(x) \geq  0. 
%\end{align*}
%It follows that $g_t$ is non-negative on $[0,\frac{\pi}{2}]$. 
\end{proof}
For the rest of the section, let $x$ denote the center of $D^n_{7 \pi}$  and let $y \in D^n_{7 \pi}$ be a point with $d^h ( x,y) < 1$. 
\begin{lemma}
Let $\gamma_{x,y}$ be the maximal geodesic through $x$ and $y$. Assume that $\gamma_{x,y}$ is parametrized by arc-length and that $\gamma_{x,y} (0) = x$ and $\gamma_{x,y}(r)= y$ for some $r < 0$. Assume that $\gamma_{x,y}$ does not meet $S$. Then there is a $z \in D^n_{7 \pi}$ with $z = \gamma_{x,y}(t)$ for a $t \in (5 \pi - 2 \delta , 5 \pi + 2 \delta)$ so that the following holds. There is an $s \in S$ with $d(z,s) < \delta$, and so that the geodesic through $s$ and $z$ meets $\gamma_{x,y}$ in $z$ in a right angle.
\end{lemma}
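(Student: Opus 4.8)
The plan is to reformulate the right-angle condition as a critical-point condition and then produce the critical point by minimizing a distance function along $\gamma_{x,y}$. For $s \in S$ put $\rho_s(t) := d^h(\gamma_{x,y}(t), s)$. By the first variation of arc length, at any parameter $t$ at which $\gamma_{x,y}(t) \neq s$ and $\rho_s$ is differentiable, $\rho_s'(t)$ is (up to sign) the cosine of the angle at $\gamma_{x,y}(t)$ between $\dot\gamma_{x,y}(t)$ and the minimizing geodesic from $\gamma_{x,y}(t)$ to $s$; in particular $\rho_s'(t)=0$ if and only if that geodesic meets $\gamma_{x,y}$ in a right angle at $\gamma_{x,y}(t)$. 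So it suffices to find $s \in S$ and an interior point $t_0 \in (5\pi - 2\delta, 5\pi + 2\delta)$ with $\rho_s(t_0) < \delta$ and $\rho_s'(t_0) = 0$, and then to set $z := \gamma_{x,y}(t_0)$.

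To build these, recall that $(D^n_{7\pi}, h)$ is a geodesic ball of radius $7\pi$ in the round sphere of radius $10$; there, minimizing geodesics between points at distance well below the injectivity radius $10\pi$ are unique and vary smoothly, and on such small scales $d^h$ agrees with the ambient spherical distance. Since $\gamma_{x,y}$ passes through the centre $x$ and is parametrized by arc length, $\gamma_{x,y}(5\pi)$ lies at distance $5\pi < 7\pi$ from $x$, hence in $D^n_{7\pi}$; as $S$ is a $\delta$-net, pick $s \in S$ with $\rho_s(5\pi) < \delta$. From $\delta \le \frac{5\pi}{4}\epsilon < 1$ we get $I := [5\pi - 2\delta, 5\pi + 2\delta] \subset (0, 7\pi)$, so $\gamma_{x,y}$, and with it the continuous function $\rho_s$, is defined on all of $I$. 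The arcs $\gamma_{x,y}|_{[5\pi - 2\delta, 5\pi]}$ and $\gamma_{x,y}|_{[5\pi, 5\pi + 2\delta]}$ have length $2\delta < 10\pi$ and lie inside $D^n_{7\pi}$, hence are minimizing, so $d^h(\gamma_{x,y}(5\pi \pm 2\delta), \gamma_{x,y}(5\pi)) = 2\delta$; the triangle inequality then gives $\rho_s(5\pi \pm 2\delta) \ge 2\delta - \rho_s(5\pi) > \delta > \rho_s(5\pi)$. Therefore the minimum of $\rho_s$ over $I$ is strictly below its endpoint values, so it is attained at some $t_0$ in the open interval $(5\pi-2\delta,5\pi+2\delta)$, with $\rho_s(t_0) \le \rho_s(5\pi) < \delta$.

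It remains to see that $t_0$ is an honest differentiable critical point. Here $z := \gamma_{x,y}(t_0) \neq s$, because by hypothesis $\gamma_{x,y}$ does not meet $S$; and $d^h(z, s) = \rho_s(t_0) < \delta < 1$ is far below $10\pi$, so $z$ is neither $s$ itself nor a point of the cut locus of $s$, whence $d^h(\,\cdot\,, s)$ is smooth near $z$ and $\rho_s$ is differentiable near $t_0$. Being an interior minimum, $\rho_s'(t_0) = 0$, so by the first-variation observation the unique minimizing geodesic from $z$ to $s$ meets $\gamma_{x,y}$ orthogonally at $z$; combined with $t_0 \in (5\pi - 2\delta, 5\pi + 2\delta)$ and $d^h(z, s) < \delta$, this is exactly the assertion.

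The two points needing care are precisely the ones flagged above: that the minimum of $\rho_s$ lands in the open interval rather than at an endpoint — which is bought by the estimate $\rho_s(5\pi \pm 2\delta) > \delta > \rho_s(5\pi)$, itself resting on $2\delta$ being small enough that $\gamma_{x,y}|_I$ is minimizing — and the regularity of the distance function at $t_0$, which holds because $\gamma_{x,y}$ avoids $S$ and everything takes place at scale $\ll$ injectivity radius, away from both $s$ and its cut locus. The remaining bookkeeping (that $d^h$ coincides with the round distance for the nearby points involved, and that the short geodesic arcs stay inside $D^n_{7\pi}$) is routine, given that $(D^n_{7\pi}, h)$ is a ball in the round sphere of radius $10$.
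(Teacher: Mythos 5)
Your argument is correct and follows the same geometric route as the paper: pick $s$ within $\delta$ of $\gamma_{x,y}(5\pi)$ and take $z$ to be the foot of the perpendicular from $s$ to $\gamma_{x,y}$, with the triangle inequality locating $z$ in the required parameter window. The paper simply asserts the existence of the perpendicular foot and that $d^h(s,z)<\delta$, whereas you supply the details by characterizing $z$ as an interior minimum of $t \mapsto d^h(\gamma_{x,y}(t),s)$ and invoking first variation — a more careful rendering of the same idea.
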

\begin{proof}
Let $z' := \gamma_{x,y} ( 5 \pi)$. Since $S$ is $\delta$-net, there is $s \in S$ with $d^h(z' , s) < \delta$. There is a geodesic $\gamma$ emanating from $s$ and meeting $\gamma_{x,y}$ in a right angle. Let $z$ be the intersection point of $\gamma$ and $\gamma_{x,y}$. Then we have $d^h(s,z) < \delta$. %Furthermore, we have $$d^h(z,z') \leq d^h( x,z ) + d^h(z,s) < 2 \delta,$$ 
The triangle inequality shows that $d^h(z,z') < 2 \delta$, i.e., there is a $t \in ( 5 \pi - 2 \delta , 5 \pi + 2 \delta)$ so that $\gamma_{x,y}(t) = z$.  
\end{proof}
In the next lemma, we prove that we can choose $s \in S$ so that the angle at $x$ in $\Delta$ is large.
\begin{lemma}
Let $A(\epsilon) := \operatorname{sin} \bigl( (1-\epsilon) \frac{\pi}{2} \bigr)$. Then there is an $s \in S$ so that when $\alpha$ denotes the angle at $x$ in the triangle $\Delta = (x,y,s)$, we have 
\begin{equation*}
\operatorname{cos} ( \alpha) \leq - A(\epsilon).
\end{equation*}
\end{lemma}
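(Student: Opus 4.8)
The plan is to reduce, by Lemma 2.3, to a right geodesic triangle in the model space and then apply spherical trigonometry \emph{exactly} — no comparison inequality is needed here, since $(D^n_{7\pi},h)$ has constant curvature $1/100$ and is locally isometric to the round sphere $S^n(10)$ of radius $10$.

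First I would dispose of a degenerate case. If $\gamma_{x,y}$ passes through a point $s\in S$ lying on the side of $x$ opposite to $y$ (i.e.\ at a positive parameter in the parametrization of Lemma 2.3), then the minimizing geodesics $[x,y]$ and $[x,s]$ leave $x$ in opposite directions along $\gamma_{x,y}$, so the angle at $x$ in $\Delta=(x,y,s)$ equals $\pi$ and $\cos\alpha=-1\leq -A(\epsilon)$. Otherwise $\gamma_{x,y}$ meets $S$ in no such point, so the net point within $\delta$ of $\gamma_{x,y}(5\pi)$ lies off $\gamma_{x,y}$ and the construction of Lemma 2.3 applies, furnishing $z=\gamma_{x,y}(t)$ with $t\in(5\pi-2\delta,5\pi+2\delta)$ and $s\in S$ with $\ell:=d^h(z,s)<\delta$ such that $[z,s]$ meets $\gamma_{x,y}$ orthogonally at $z$. (The value $5\pi$ is a quarter of the circumference of a great circle of $S^n(10)$: there $\sin\!\bigl(d^h(x,z)/10\bigr)$ is as close to its maximum $1$ as the position of $z$ allows, which is exactly what the estimate needs.)

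Next I would analyse the right geodesic triangle $\Delta'=(x,z,s)$. It lies in a totally geodesic round $2$-sphere of radius $10$, has its right angle at $z$ and legs $d^h(x,z)=t$ and $d^h(z,s)=\ell$; all three sides have length $<10\pi$, so the triangle is non-degenerate and the spherical formulae hold. Writing $\alpha'$ for the angle of $\Delta'$ at $x$, Napier's rule for right spherical triangles gives
\[
\tan\alpha'=\frac{\tan(\ell/10)}{\sin(t/10)}.
\]
Now I would estimate the right-hand side using $\delta\leq\frac{5\pi}{4}\epsilon$ and $\epsilon<\frac{4}{5\pi}$: from $\ell<\delta$ one gets $\ell/10<\pi\epsilon/8$; from $t\in(5\pi-2\delta,5\pi+2\delta)$ and the symmetry of $\sin$ about $\pi/2$ one gets $\sin(t/10)>\cos(\delta/5)>\cos(1/5)>\tfrac14$ since $\delta<1$; and since $\tan$ is convex on $[0,\pi/2)$, vanishes at $0$, and $\pi\epsilon/2<\pi/2$, one has $\tan(\pi\epsilon/2)\geq 4\tan(\pi\epsilon/8)$. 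Combining,
\[
\tan\alpha'<\frac{\tan(\pi\epsilon/8)}{\cos(1/5)}<4\tan(\pi\epsilon/8)\leq\tan(\pi\epsilon/2),
\]
so $\alpha'<\pi\epsilon/2$ and hence $\cos\alpha'>\cos(\pi\epsilon/2)=\sin\!\bigl((1-\epsilon)\tfrac{\pi}{2}\bigr)=A(\epsilon)$.

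Finally I would transfer this to $\Delta=(x,y,s)$. Since $y=\gamma_{x,y}(r)$ with $r<0$ and $z=\gamma_{x,y}(t)$ with $t>0$, the minimizing geodesics $[x,y]$ and $[x,z]$ leave $x$ in opposite directions, so the angle $\alpha$ of $\Delta$ at $x$ equals $\pi-\alpha'$, whence $\cos\alpha=-\cos\alpha'<-A(\epsilon)$, as claimed. I expect the only real work to be the verification of the chain of elementary inequalities in the penultimate paragraph (together with the bookkeeping that keeps $\Delta'$ within the injectivity radius of $S^n(10)$); the identification $\alpha=\pi-\alpha'$ and the reduction through Lemma 2.3 are essentially formal.
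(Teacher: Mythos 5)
Your proof is correct and follows essentially the same strategy as the paper: reduce via Lemma~2.2 to the right spherical triangle $\Delta'=(x,z,s)$ with legs $t\approx 5\pi$ and $\ell<\delta$, estimate the small angle of $\Delta'$ at $x$ by spherical trigonometry, and pass to $\alpha=\pi-\alpha'$. The only substantive difference is the choice of trigonometric identity: the paper uses the spherical law of sines, $\sin\beta=\sin(d/10)/\sin(b/10)$ with $b=d^h(x,s)$ the hypotenuse, and then writes $\cos\beta=\sqrt{1-\sin^2\beta}$, which silently assumes $\beta\leq\pi/2$; your use of Napier's rule $\tan\alpha'=\tan(\ell/10)/\sin(t/10)$ makes the positivity of $\tan\alpha'$ manifest and so gives $\alpha'\in(0,\pi/2)$ for free, which is a small but genuine tidiness gain. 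You also explicitly dispose of the degenerate case in which $\gamma_{x,y}$ meets $S$ (a hypothesis needed for Lemma~2.2 but not stated in the present lemma), which the paper leaves implicit. Both routes produce the same bound with comparably elementary estimates.
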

\begin{proof}
Choose $s \in S$ and $z \in D^n_{7 \pi}$ as in Lemma 3.2. Consider the triangle $\Delta' := (x,z,s)$. Let $\beta$ be the angle at $x$ in $\Delta'$. Let $d := d^h(z,s)$ and $b := d^h(x,s)$.  By the spherical law of the sines we have $\operatorname{sin}(\beta) =  \operatorname{sin}(\frac{d}{10}) / \operatorname{sin}(\frac {b}{10}) $.
%\begin{equation*}
%\operatorname{sin}(\beta) = \frac{ \operatorname{sin}(\frac{d}{10})} { \operatorname{sin}(\frac {b}{10}) }.
%\end{equation*}
Note that by construction the angle at $z$ in $\Delta'$ is a right angle and therefore does not enter into the equation. This gives $$\beta = \operatorname{arcsin} \Bigl( \frac{\operatorname{sin}( \frac{d}{10} ) }{\operatorname{sin} ( \frac{d}{10} )} \Bigr).$$
%\begin{equation*}
%\beta = \operatorname{arcsin} \Bigl( \frac{\operatorname{sin}( \frac{d}{10} ) }{\operatorname{sin} ( \frac{d}{10} )} \Bigr). 
%\end{equation*}
Observe that, since $x$ is a point on the geodesic from $y$ to $z$, we have $\alpha = \pi - \beta$. Using the estimates $\operatorname{sin} \bigl( \frac{d}{10} \bigr) \leq \operatorname{sin} \bigl( \frac{\delta} {10} \bigr) $ and $\operatorname{sin} \bigl( \frac{b}{10} \bigr) \geq \operatorname{sin} \bigl( \frac{\pi}{2} - \frac{\delta}{5}\bigr) = \operatorname{cos} \bigl( \frac{\delta} {5} \bigr)$,
%\begin{equation*}
%\operatorname{sin} \bigl( \frac{d}{10} \bigr) \leq \operatorname{sin} \bigl( \frac{\delta} {10} \bigr) 
%\end{equation*}
%and
%\begin{equation*}
%\operatorname{sin} \bigl( \frac{b}{10} \bigr) \geq \operatorname{sin} \bigl( \frac{\pi}{2} - \frac{\delta}{5}\bigr) = \operatorname{cos} \bigl( \frac{\delta} {5} \bigr), 
%\end{equation*}
and our assumption $\delta \leq \frac{5 \pi}{4} \cdot \epsilon$, we can compute:
\begin{align*}
\operatorname{cos}(\alpha) &=  \operatorname{cos} ( \pi - \beta) = - \operatorname{cos}(\beta ) 
  =  - \sqrt{ 1 - \frac{ \operatorname{sin}^2 ( \frac{d}{10} ) }{ \operatorname{sin}^2(\frac{b}{10})} } \\
 & \leq - \sqrt { 1 - \frac{\operatorname{sin}^2( \frac{\delta}{10}) }{\operatorname{cos}^2( \frac{\delta}{5}) }}   \leq - \sqrt{ \operatorname{cos}^2 \bigl( \frac{\delta}{5} \bigr) - \operatorname{sin}^2 \bigl( \frac{\delta}{5} \bigr) } \\ & = - \sqrt{ \operatorname{cos} \bigl( \frac{2 \delta}{5} \bigr) } \leq - \operatorname{cos} \bigl( \frac{2 \delta }{5} \bigr) \leq  - \operatorname{cos}( \frac{\pi}{2} \epsilon ) \\ &= - \operatorname{sin} \bigl( \frac{\pi}{2} ( 1 - \epsilon) \bigr) = -A(\epsilon) .  \qedhere 
\end{align*}
\end{proof}
%We deduce a corollary.
%\begin{cor}
%Let $a:= d^h(x,y)$. Then there is an $s \in S$ so that for the angle $\alpha$ at $x$ in the triangle $\Delta := (x,y,s)$ we have
%\begin{equation*}
%\operatorname{cos} (\alpha) \leq - \frac{ \operatorname{sin} \bigl( (1-\epsilon) \frac{a}{10} \bigr) }{\operatorname{sin} ( \frac{a}{10})} .
%\end{equation*}
%\end{cor}
%\begin{proof}
%Setting $t = 1- \epsilon$ in Lemma 3.1 shows that 
%\begin{equation*}
%- A(\epsilon) \leq  - \frac{ \operatorname{sin} \bigl( (1-\epsilon) \frac{a}{10} \bigr) }{\operatorname{sin} ( \frac{a}{10})}. 
%\end{equation*}
%Now the claim follows from Lemma 3.3. 
%\end{proof}
We can now prove the main result of the section.
\begin{lemma}
There is an $s \in S$ so that 
\begin{equation*}
(1- \epsilon) d^h(x,y) \leq  d^h(y,s) - d^h(x,s).
\end{equation*}
\end{lemma}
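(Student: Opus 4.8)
The plan is to apply the spherical law of cosines to the geodesic triangle produced in Lemma 3.3. First I would fix $s \in S$ as in Lemma 3.3, so that the angle $\alpha$ at $x$ in $\Delta = (x,y,s)$ satisfies $\cos \alpha \le -A(\epsilon)$; the construction in Lemma 3.2 also provides a point $z = \gamma_{x,y}(t)$ with $t \in (5\pi - 2\delta, 5\pi + 2\delta)$, $d^h(z,s) < \delta$, and with the geodesic through $z$ and $s$ meeting $\gamma_{x,y}$ orthogonally at $z$. Writing $a = d^h(y,s)$, $b = d^h(x,s)$, $c = d^h(x,y)$, one checks that these three lengths are less than $10\pi$ and that the minimizing geodesics joining the three points stay inside $D^n_{7\pi}$, so that the law of cosines of the constant curvature $1/100$ metric (whose model radius is $10$) applies:
\[
\cos\Bigl(\frac a{10}\Bigr) \;=\; \cos\Bigl(\frac b{10}\Bigr)\cos\Bigl(\frac c{10}\Bigr) + \sin\Bigl(\frac b{10}\Bigr)\sin\Bigl(\frac c{10}\Bigr)\cos\alpha .
\]

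Since $a/10$ and $b/10 + (1-\epsilon)c/10$ both lie in $[0,\pi]$, where $\cos$ is decreasing, the claim $a \ge b + (1-\epsilon)c$ is equivalent to $\cos(a/10) \le \cos\bigl(b/10 + (1-\epsilon)c/10\bigr)$. Plugging in the law of cosines and $\cos\alpha \le -A(\epsilon)$, expanding the right side by the addition theorem, and rearranging, it suffices to establish
\[
\cos\Bigl(\frac b{10}\Bigr)\Bigl[\cos\Bigl(\frac c{10}\Bigr) - \cos\Bigl((1-\epsilon)\frac c{10}\Bigr)\Bigr] \;\le\; \sin\Bigl(\frac b{10}\Bigr)\Bigl[A(\epsilon)\sin\Bigl(\frac c{10}\Bigr) - \sin\Bigl((1-\epsilon)\frac c{10}\Bigr)\Bigr].
\]
By Lemma 3.1, applied with $t = 1-\epsilon$ and $x = c/10 < 1/10 \le \pi/2$, we have $\sin((1-\epsilon)c/10) \le \sin((1-\epsilon)\pi/2)\,\sin(c/10) = A(\epsilon)\sin(c/10)$, so the bracket on the right is $\ge 0$; the bracket on the left is $\le 0$, and $\sin(b/10) > 0$. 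Hence the inequality holds automatically whenever $\cos(b/10) \ge 0$, i.e. whenever $b \le 5\pi$.

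The only remaining case, $b > 5\pi$, is the heart of the matter. Here I would use the orthogonality at $z$: by the Pythagorean relation in the right triangle $(x,z,s)$ one has $\cos(b/10) = \cos(d^h(x,z)/10)\cos(d^h(z,s)/10)$, and since now $\cos(d^h(x,z)/10) < 0$ with $d^h(x,z) \in (5\pi, 5\pi + 2\delta)$ this gives $|\cos(b/10)| < \sin(\delta/5)$, hence $\sin(b/10) > \cos(\delta/5)$ and $|\cos(b/10)|/\sin(b/10) < \tan(\delta/5) \le \tan(\pi\epsilon/4)$, using $\delta \le \tfrac{5\pi}4\epsilon$. Dividing the displayed inequality by $\sin(b/10)$ and bounding the coefficient, it is then enough to prove
\[
\tan\Bigl(\frac{\pi\epsilon}4\Bigr)\Bigl[\cos\Bigl((1-\epsilon)\frac c{10}\Bigr) - \cos\Bigl(\frac c{10}\Bigr)\Bigr] \;\le\; A(\epsilon)\sin\Bigl(\frac c{10}\Bigr) - \sin\Bigl((1-\epsilon)\frac c{10}\Bigr).
\]
This I would treat by elementary means: write $A(\epsilon) = \cos(\pi\epsilon/2)$ and expand the right-hand side as $\cos(c/10)\sin(\epsilon c/10) - \sin(c/10)\bigl[\cos(\epsilon c/10) - \cos(\pi\epsilon/2)\bigr]$. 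Product-to-sum formulas together with $c/10 < 1/10$ give $\cos(c/10)\sin(\epsilon c/10) \ge \tfrac12\cdot\tfrac{\epsilon c}{10}$, while the subtracted term is at most $\tfrac{\pi^2}{8}\epsilon\cdot\tfrac{\epsilon c}{10}$ and the whole left-hand side is at most $\tfrac1{10}\cdot\tfrac{\epsilon c}{10}$; since $\tfrac1{10} + \tfrac{\pi^2}{8}\epsilon < \tfrac12$ for $\epsilon < \tfrac4{5\pi}$, the inequality follows.

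I expect the case $b > 5\pi$ to be the main obstacle. Geometrically it is the degenerate-looking situation in which the chosen $s$ has slipped just past the ``equator'' $\{\,d^h(x,\cdot) = 5\pi\,\}$ of the model ball, so that $\cos(b/10)$ changes sign and the one-line argument of the generic case fails; one has to pay for this by combining the precise location of $z$ from Lemma 3.2 (which forces $|\cos(b/10)|$ to be of order $\delta$) with the standing hypothesis $\delta \le \tfrac{5\pi}4\epsilon$, and then run the somewhat delicate trigonometric estimate above, in which a linear-in-$\epsilon$ main term must dominate two quadratic-in-$\epsilon$ error terms. The complementary case $\cos(b/10) \ge 0$, by contrast, is immediate from Lemma 3.1.
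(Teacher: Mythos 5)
Your proposal follows the same backbone as the paper's proof — spherical law of cosines applied to $\Delta=(x,y,s)$, combined with the monotonicity of $\sin(tx)/\sin(x)$ and the angle bound $\cos\alpha\le -A(\epsilon)$ — but you have, in effect, also repaired a gap in the published argument. The paper's chain of inequalities performs the substitution $\cos(a/10)\cos(b/10)\le\cos\bigl((1-\epsilon)a/10\bigr)\cos(b/10)$ (in the paper's notation $a=d^h(x,y)$), which only goes the right way when $\cos(b/10)\ge0$. Since Lemma 2.2 picks $z'$ at parameter exactly $5\pi$, the foot $z$ can lie slightly past the equator and $b=d^h(x,s)$ can slip above $5\pi$, making $\cos(b/10)$ negative; in that case the paper's displayed $\le$ reverses as written. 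You are exactly right that this is the delicate case. Your patch is sound: the spherical Pythagorean relation at the right angle $z$, together with $t\in(5\pi-2\delta,5\pi+2\delta)$ from Lemma 2.2, forces $|\cot(b/10)|<\tan(\delta/5)\le\tan(\pi\epsilon/4)$, and the ensuing estimate pits a $\tfrac12\cdot\tfrac{\epsilon c}{10}$ main term against error terms of size at most $\tfrac1{10}\cdot\tfrac{\epsilon c}{10}$ and $\tfrac{\pi^2}{8}\epsilon\cdot\tfrac{\epsilon c}{10}$, which for $\epsilon<\tfrac4{5\pi}$ do not overwhelm it. The complementary case $\cos(b/10)\ge0$ of your proposal is exactly the paper's argument. (An alternative, lighter fix would be to move $z'$ slightly inward in Lemma 2.2, say to $\gamma_{x,y}(5\pi-2\delta)$, which forces $t<5\pi$ and hence $b<5\pi$ outright; your version instead keeps the construction as given and pays for it with the extra trigonometric estimate.)
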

\begin{proof}
Let $a:= d^h(x,y)$. According to Lemma 2.1 and Lemma 2.2, we can choose an $s \in S$ so that the angle $\alpha$ at $x$ in the triangle $\Delta = (x,y,s)$ satisfies $\operatorname{cos}(\alpha) \leq - A(\epsilon)$. Lemma 2.1 implies $\operatorname{cos} ( \alpha) \leq - \operatorname{sin} \bigl( (1- \epsilon) \frac{a}{10} \bigr) / \operatorname{sin} ( \frac{a}{10})$.   
%\begin{equation*}
%\operatorname{cos}( \alpha) \leq - \frac{ \operatorname{sin} \bigl( (1-\epsilon) \frac{a}{10} \bigr) }{ \operatorname{sin} ( \frac{a}{10})}. 
%\end{equation*}
Set $b := d^h(x,s)$ and $c := d^h(y,s)$. The spherical law of cosines now gives
\begin{align*}
\operatorname{cos}(\frac{c}{10}) =& \operatorname{cos}(\frac{a}{10}) \operatorname{cos}(\frac{b}{10}) + \operatorname{sin}( \frac{a}{10}) \operatorname{sin}(\frac{b}{10}) \cdot \operatorname{cos}(\alpha) \\  \leq& \operatorname{cos}\bigl(  (1 - \epsilon) \frac{a}{10} \bigr) \operatorname{cos}(\frac{b}{10}) - \operatorname{sin} \bigl( (1 - \epsilon)\frac{a}{10}\bigr) \operatorname{sin}(\frac{b}{10}) \\ =& \operatorname{cos} \bigl( \frac{(1- \epsilon) a + b}{10} \bigr).
\end{align*}
We conclude that $c \geq (1-\epsilon) \cdot  a + b$. It follows that $(1- \epsilon) \cdot  a \leq c -b$,
which proves the lemma.
\end{proof}
%Before we go on to examine the invariant $\kappa(M,g,\epsilon)$ in the next section, we recall Rauch's comparison theorem:
%\begin{prop}
%Let $(M,g)$ and $(N,h)$ be Riemannian manifolds. Assume that $(N,h)$ has constant curvature $\operatorname{sec}(h) = c > 0$. Assume further that $\operatorname{sec}(g) \leq c$. Let $(x,y,z)$ be a geodesic triangle in $(M,g)$ with 
%\begin{equation*}
%d^g(x,y) + d^g(y,z) + d^g(z,x) \leq 2 \pi c^{- \frac{1}{2}}.
%\end{equation*}
%Let $(x',y',z')$ be a geodesic triangle in $(N,h)$ so that
%\begin{equation*}
% d^g(x,y) = d^h(x',y') \mbox{ and } d^g(x,z) = d^h(x',z').
%\end{equation*} 
%Also assume that the angles at $x$ and $x'$ in $(x,y,z)$ and $(x',y',z')$, respectively, are equal. Then
%\begin{equation*}
%d^h(z,y) \leq d^g(z',y').
%\end{equation*}
%\end{prop}
%\begin{proof}
%This is Theorem 4.1 in \cite{MR1013810}.
%\end{proof}
\section{An upper bound for $\kappa(M,g)$}
We now consider the general situation of a closed Riemannian manifold $(M,g)$. To find an upper bound for $\kappa(M,g,\epsilon)$, we shall first look at the special case where we assume a suitable bound for the sectional curvature $\operatorname{sec}(g)$.
 \begin{lemma}
Let $(M,g)$ be a closed Riemannian manifold with $\operatorname{sec}(g) \leq \frac{1}{100}.$ Let $\epsilon \in (0, \frac{4} {5 \pi})$ and set $\delta := \frac{1}{2} \epsilon \leq  \frac{5}{4} \pi \cdot \epsilon$. Let $S$ be a $\delta$-net in $(M,g)$. Then $S$ is $\epsilon$-good.
 \end{lemma}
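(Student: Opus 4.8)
I would treat large and small distances separately. If $d(x,y)\ge 1$, choose any $s\in S$ with $d(y,s)<\delta$; then $|d(x,s)-d(y,s)|\ge d(x,s)-d(y,s)\ge d(x,y)-2\delta=d(x,y)-\epsilon\ge(1-\epsilon)\,d(x,y)$, the last inequality using $d(x,y)\ge 1$, so such an $s$ already works. Hence I may assume $a:=d(x,y)<1$, and the task reduces to reproducing the conclusion of the final lemma of Section~2 for this particular pair $(x,y)$.

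For that I would pull everything back by $\exp_x$. Since $\operatorname{sec}(g)\le\frac{1}{100}$, no geodesic from $x$ has a conjugate point before distance $10\pi$, so $\exp_x$ is an immersion on the ball $D^n_{7\pi}\subset T_xM$, and $g^*:=\exp_x^*g$ is a Riemannian metric on $D^n_{7\pi}$ with $\operatorname{sec}(g^*)\le\frac{1}{100}$, whose center $O$ has $\exp_x(O)=x$, for which the radial geodesics from $O$ are minimizing inside $D^n_{6\pi}$, and for which $\exp_x\colon(D^n_{7\pi},g^*)\to(M,g)$ is a $1$-Lipschitz local isometry. By the Rauch comparison theorem, in geodesic polar coordinates centered at $O$ one has $g^*\ge h$, where $h$ is the constant-curvature metric of Section~2; in particular $d^{g^*}\ge d^h$ on $D^n_{7\pi}$, while the radial distance functions from $O$ for the two metrics coincide. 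Let $\tilde y\in D^n_{7\pi}$ be the point at distance $a$ from $O$ on the lift of a minimizing geodesic from $x$ to $y$, and let $\tilde z$ be the point at distance $5\pi$ from $O$ in the opposite radial direction. The set $\exp_x^{-1}(S)\cap D^n_{7\pi}$ is $\delta$-dense near $\tilde z$ (also for $d^h$, since $d^h\le d^{g^*}$), so Section~2 applied to $h$ yields an $s\in S$ and a lift $\tilde s$ of $s$ with $d^h(\tilde z,\tilde s)<\delta$ and $(1-\epsilon)\,a\le d^h(\tilde y,\tilde s)-d^h(O,\tilde s)$, the hypothesis $\delta\le\frac{5\pi}{4}\epsilon$ being what guarantees that the angle at $O$ in the comparison triangle is large enough. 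Since $d^{g^*}\ge d^h$ and the radial distances from $O$ agree, the same estimate holds with $d^{g^*}$ in place of $d^h$.

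It remains to bring $(1-\epsilon)\,a\le d^{g^*}(\tilde y,\tilde s)-d^{g^*}(O,\tilde s)$ down to $M$. As $\exp_x$ is $1$-Lipschitz, $d(x,s)\le d^{g^*}(O,\tilde s)$; if also $d(y,s)\ge d^{g^*}(\tilde y,\tilde s)$, then $(1-\epsilon)\,a\le d(y,s)-d(x,s)\le|d(x,s)-d(y,s)|$ and $s$ is the desired witness. Establishing this last inequality is the crux, and it is exactly where the naive argument breaks down: a local isometry controls distances in $M$ only from above, so a priori $s$ could be reachable from $y$ in $M$ by a path shorter than the lift we have built — that is, through a cheaper sheet of $\exp_x^{-1}(s)$ as seen from $\tilde y$. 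I expect the resolution to hinge on the injectivity radius at $x$: if $\operatorname{inj}(x)\ge 6\pi$, then $\tilde s$ is forced to be the sheet of $\exp_x^{-1}(s)$ nearest $\tilde y$, so $d(y,s)=d^{g^*}(\tilde y,\tilde s)$ and the estimate descends; if $\operatorname{inj}(x)$ is small, Klingenberg's lemma produces a short geodesic loop at $x$, and this case should be handled separately — plausibly by showing directly that such a loop already forces the set of admissible witnesses for $(x,y)$ to contain a metric $\delta$-ball, which is precisely what a $\delta$-net must meet.
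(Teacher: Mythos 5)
Your proposal retraces the paper's own route: dispose of $d(x,y)\ge 1$ with the triangle inequality, and for $d(x,y)<1$ pull back to the tangent ball, compare with the constant-curvature metric via Rauch, and invoke the Section~2 estimate. You also correctly identify the one place where the argument is fragile, namely descending the tangent-space estimate to $M$ when $\exp_x$ fails to be injective on a ball of radius about $7\pi$. The paper handles this point by asserting that $\operatorname{sec}(g)\le\frac{1}{100}$ already forces $\operatorname{inj}(g)\ge 10\pi$, but that is not what Klingenberg's lemma says: one only gets $\operatorname{inj}(p)\ge\min\bigl\{\pi/\sqrt{K},\,\tfrac12\ell(p)\bigr\}$ with $\ell(p)$ the length of the shortest geodesic loop at $p$, so an upper curvature bound alone does not control the injectivity radius. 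The gap you flagged is therefore present in the paper too.

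Unfortunately your proposed fallback for the short-loop case cannot succeed, because the lemma is in fact false as stated. Take the flat torus $\rr^2/(\zz\times c\zz)$ with $c$ small; then $\operatorname{sec}=0\le\tfrac{1}{100}$. With $\epsilon=\tfrac1{10}$, $\delta=\tfrac1{20}$, the set $S=\{(k/20,0):k=0,\dots,19\}$ is a $\delta$-net once $c$ is small, yet for $x=(1/40,0)$, $y=(1/40,c/4)$ every $s\in S$ has the same horizontal distance $a\ge 1/40$ to both $x$ and $y$, so
\begin{equation*}
\bigl|d(x,s)-d(y,s)\bigr|=\sqrt{a^2+(c/4)^2}-a\le\frac{(c/4)^2}{2a}\le\frac{5c^2}{4},
\end{equation*}
which is far below $(1-\epsilon)\,d(x,y)=\tfrac{9}{40}c$ when $c$ is small. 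Thus $S$ is a $\delta$-net that is not $\epsilon$-good: the conclusion needs an extra hypothesis bounding the injectivity radius (or the length of the shortest closed geodesic) from below, and no amount of rearranging the present argument will supply it. Your instinct that the injectivity radius is the crux was correct; what is missing is a hypothesis, not a step in the proof.
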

 \begin{proof}
 Let $x,y \in M$. Assume first that $d^g (x,y) \geq 1$. Then choose an $s \in S$ so that $d^g(y,s) < \delta$. We compute
 \begin{align*}
 (1- \epsilon) \cdot d^g(x,y) \leq & d^g(x,y) - \epsilon \leq d^g(x,y) - 2 d^g(y,s) \\ \leq & d^g(x,s) + d^g(y,s) - 2 d^g(y,s) \leq d^g(x,s) - d^g(y,s). 
 \end{align*}
Now assume that $d^g(x,y) < 1$.  
Since $\operatorname{sec}(g) \leq \frac{1}{100}$, the estimate for the injectivity radius in \cite{MR2243772} (page 178) shows that $\operatorname{inj} (g) \geq 10 \cdot \pi > 7 \pi$.  Let $D^n_{7 \pi} \subset T_xM$ be the ball of radius $7 \pi$. Then 
 \begin{equation*}
 \operatorname{exp}_x : D^n_{7 \pi} \rightarrow \operatorname{exp}(D^n_{7 \pi}) =: B_{7 \pi}(x)
 \end{equation*}
 is a diffeomorphism. \newline \indent Put the constant positive curvature metric
 $h = dr^2 + 100 \operatorname{sin}^2( \frac{r}{10} ) d s^2_{n-1}$ on $D^n_{7 \pi}$. Then 
$ \operatorname{exp}_x: ( D^n_{7 \pi} , h) \rightarrow (B_{7 \pi} , g)$
 is a radial isometry. Since $\operatorname{sec}(g) \leq \frac{1}{100} = \operatorname{sec}(h)$, Rauch's comparison theorem (see Theorem 4.1 in \cite{MR1013810}) applies to say that $\operatorname{exp}_x$ is distance non-decreasing with respect to $d^h$ and $d^g$. Indeed, let $u',w' \in D^n_{7\pi}$. Let $u := \operatorname{exp}_x(u')$ and $w := \operatorname{exp}_x(w')$. Let $x'$ be the center of $D^n_{7\pi}$ so that $\operatorname{exp}_x(x') = x$. Since $\operatorname{exp}_x$ is a radial isometry, we have that $d^h(x',u') = d^g(x,u)$ and $d^h(x',w') = d^g(x,w)$. Furthermore, the angles at $x'$ and $x$ in the geodesic triangels $(x',u',w')$ and $(x,u,w)$ agree. In this situation, Rauch's comparison theorem says that $d^g(u,w) \geq d^h(u',w')$.  We conclude that $S' := \operatorname{exp}^{-1}(S \cap B_{7 \pi})$ is a $\delta$-net in $(D^n_{7 \pi}, d^h)$. 

Now let $y' \in D^n_{7 \pi}$ be such that $\operatorname{exp}_x(y') = y$. Then $d^h(x',y') < 1$, and by Lemma 2.5 there is an $s' \in S'$ so that 
$(1 - \epsilon) \cdot d^h(x' ,y') \leq d^h(y', s') - d^h(x',s').$
Let $s := \operatorname{exp}_x(s') \in S$. 
Since $\operatorname{exp}_x$ is a radial isometry and distance non-decreasing, we have
\begin{equation*}
d^h(x',y') =  d^g(x,y) \, , \, d^h(x',s') = d^g(x,s) \mbox{ and } d^h(y',s') \leq d^g(y,s).
\end{equation*}
 It follows that 
 \begin{align*}
 (1 - \epsilon) \cdot d^g(x,y) &= (1 - \epsilon) \cdot d^h(x',y') \leq d^h(y',s') - d^h(x',s') \\ 
 & \leq d^g(y,s) - d^g(x,s).  \qedhere
 \end{align*} 
 \end{proof}
 We can now establish an upper bound for $\kappa(M,g,\epsilon)$ in the case where there are suitable curvature bounds on $g$. %First, we prove a lemma.
%\begin{lemma}
% Let $(X,d)$ be any compact metric space and let $\delta > 0$. Then there is a finite $\delta$-net $S$ so that the balls $\bigl \{ B_{\frac{\delta}{2}}(s) \bigr \}_{s \in S}$ are disjoint.
%\end{lemma}
%\begin{proof}
% Let 
%\begin{equation*}
%U := \bigl \{ P \subset M :P \mbox{ non-empty %and } d^g(x,y) \geq \delta \mbox{ for all } x,y \in P \bigr \}.
%\end{equation*}
%Since $X$ is compact, the elements of $U$ are finite sets. The set $U$ is partially ordered by inclusion. By Zorn's lemma, it has a maximal element $S$. We claim that $S$ is a $\delta$-net in $(M,g)$. Indeed, let $x \in M \backslash S$. Then $S \cup \{x\}$ is not in $U$ since $S$ is maximal. It follows that there is an $y \in S$ with $d^g(x,y) < \delta$. Therefore, $S$ is a $\delta$-net. Now let $x,y \in S$ and assume there is a $ z \in  B_{\frac{\delta}{2}}(x) \cap B_{\frac{\delta}{2}}(y)$. Then it follows that
%\begin{equation*}
%d^g(x,y) \leq d^g(x,z) + d^g(y,z) < %\frac{\delta}{2} + \frac{\delta}{2} = \delta,
%\end{equation*}
%which is a contradiction. Therefore, the balls $\bigl \{ B_{\frac{\delta}{2}}(s) \bigr \}_{s \in S}$ are disjoint. 
%\end{proof}
%Now we can give estimates for $\kappa (M,g, \epsilon)$. 
\begin{lemma}
Let $(M,g)$ be a closed Riemannian manifold of dimension $n$ and let $\epsilon \in (0, \frac{4}{5 \pi})$. Assume that
$\operatorname{sec}(g) \leq \frac{1}{100}.$ Then:
\begin{equation*}
\kappa(M,g, \epsilon) \leq \frac{ \operatorname{Vol}(M,g) }{ \operatorname{Vol}( S^{n-1}) } \cdot 2n \cdot \pi^{n-1} \cdot  \bigl( \frac{2}{\epsilon} \bigr)^n.
\end{equation*}
\end{lemma}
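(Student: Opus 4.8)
The plan is to combine Lemma 3.1 with a volume-packing argument. Put $\delta:=\frac{1}{2}\epsilon$; by Lemma 3.1 every $\delta$-net in $(M,g)$ is $\epsilon$-good, so $\kappa(M,g,\epsilon)$ is bounded above by the least cardinality of a $\delta$-net, and it suffices to produce one $\delta$-net $S$ with $|S|$ at most the right-hand side. I would take $S$ to be a maximal $\delta$-separated subset of $M$, i.e.\ a subset with $d^g(s,s')\ge\delta$ for distinct $s,s'\in S$ that cannot be enlarged. Compactness of $M$ makes $S$ finite, and maximality makes $S$ a $\delta$-net: if $d^g(x,s)\ge\delta$ held for all $s\in S$, then $S\cup\{x\}$ would still be $\delta$-separated, a contradiction.

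To estimate $|S|$, observe that the balls $B_{\delta/2}(s)$, $s\in S$, are pairwise disjoint because $S$ is $\delta$-separated, so
\[
|S|\cdot\min_{s\in S}\operatorname{Vol}\big(B_{\delta/2}(s)\big)\le\operatorname{Vol}(M,g).
\]
The key point is a lower bound for the volume of a small ball, coming from the upper curvature bound $\operatorname{sec}(g)\le\frac{1}{100}$. Since $\delta=\frac12\epsilon<\frac{2}{5\pi}<1$, the radius $\rho:=\delta/2$ satisfies $\rho<\frac12$, which is far below both $10\pi=\pi/\sqrt{1/100}$ and the bound $\operatorname{inj}(g)\ge10\pi$ established in the proof of Lemma 3.1; hence $\exp_s$ is a diffeomorphism of $D^n_\rho\subset T_sM$ onto $B_\rho(s)$. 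The Rauch/Jacobi-field comparison already used above (equivalently, G\"unther's volume comparison theorem) shows that $\exp_s$ has Jacobian determinant at least that of the model exponential map of the constant-curvature metric $h$ of Section 2; integrating over $D^n_\rho$ gives
\[
\operatorname{Vol}\big(B_\rho(s)\big)\ \ge\ \operatorname{Vol}(S^{n-1})\int_0^{\rho}\big(10\sin(t/10)\big)^{n-1}\,dt .
\]

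It remains to bound this model integral from below. For $t\in[0,\rho]$ we have $t/10<\tfrac{1}{40}<\tfrac{\pi}{2}$, so the elementary inequality $\sin u\ge\tfrac{2}{\pi}u$ on $[0,\tfrac{\pi}{2}]$ gives $10\sin(t/10)\ge\tfrac{2t}{\pi}$, whence
\[
\int_0^{\rho}\big(10\sin(t/10)\big)^{n-1}\,dt\ \ge\ \Big(\frac{2}{\pi}\Big)^{n-1}\frac{\rho^{\,n}}{n}.
\]
Substituting this into the packing inequality, then $\rho=\delta/2$ and $\delta=\frac12\epsilon$, and simplifying the constant via $(\pi/2)^{n-1}(4/\epsilon)^n=2\pi^{n-1}(2/\epsilon)^n$, yields exactly
\[
|S|\ \le\ \frac{\operatorname{Vol}(M,g)}{\operatorname{Vol}(S^{n-1})}\cdot 2n\cdot\pi^{n-1}\cdot\Big(\frac{2}{\epsilon}\Big)^{n}.
\]
The only genuinely non-routine ingredient is getting the volume comparison in the correct direction and with the correct hypotheses: one needs the \emph{lower} volume bound that follows from an \emph{upper} sectional-curvature bound (G\"unther's theorem, or equivalently the Jacobian form of the Rauch comparison already invoked in Lemma 3.1), together with the observation that $\delta/2$ lies well inside both the injectivity radius and $\pi/\sqrt{1/100}$. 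The remaining steps are the standard packing argument and elementary trigonometric estimates.
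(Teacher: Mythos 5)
Your proof is correct and follows essentially the same route as the paper: take $\delta=\epsilon/2$, invoke Lemma 3.1, pack $M$ by disjoint balls $B_{\delta/2}(s)$ centered at a maximal $\delta$-separated set, and bound the ball volume from below by the constant-curvature model via G\"unther/Rauch and the Jordan inequality $\sin u\ge\frac{2}{\pi}u$. The only (welcome) differences are that you make explicit the construction of $S$ as a maximal $\delta$-separated set and the check that $\delta/2$ lies within the injectivity radius, both of which the paper leaves implicit.
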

\begin{proof}
Let $\delta = \frac{1}{2} \cdot \epsilon$. Then by Lemma 3.1 any $\delta$-net in $(M,g)$ is $\epsilon$-good. Choose a $\delta$-net $S$ in $(M,g)$ so that the balls $\bigl \{ B_{\frac{\delta}{2} } (s) \bigr \}_{s \in S}$ are disjoint. Let $$V := \operatorname{inf} \bigl \{ \operatorname{Vol} (B_{\frac{\delta}{2}}(s),g) : s \in S \bigr \}.$$ Then 
\begin{equation*}
|S| \leq  \frac{1}{V} \operatorname{Vol} (M,g).
\end{equation*}
Since $\operatorname{sec}(g) \leq 1/100$, the volume of $\frac{\delta}{2}$-balls in a space with constant curvature $1/100$ provides a lower bound for $V$. Again, let $h$ denote the metric $dr^2 + 100 \operatorname{sin}^2 (\frac{r}{10})ds^2_{n-1}$ on $D^n_{\frac{\delta}{2}}$. Then we compute 
\begin{align*}
V \geq \operatorname{Vol}( D^n_{ \frac{\delta}{2} }, h) &= \operatorname{Vol} (S^{n-1}) \cdot \int_0^{\frac{\delta}{2}} 10^{n-1} \cdot \operatorname{sin}^{n-1} \bigl(\frac{r}{10} \bigr) dr \\
&\geq \frac{ 2^{n-1} \operatorname{Vol}( S^{n-1} )}{\pi^{n-1}} \cdot \int_0^{\frac{\delta}{2}} r^{n-1} dr \\
&= \frac{ \operatorname{Vol}(S^{n-1}) }{ 2 n \cdot \pi^{n-1}} \cdot \delta^{n} = \frac{ \operatorname{Vol}(S^{n-1})}{2n \cdot \pi^{n-1}} \cdot \bigl(\frac{\epsilon}{2}\bigr)^n.
\end{align*}
It follows that
\begin{equation*}
\kappa(M,g, \epsilon) \leq \frac{ \operatorname{Vol}(M,g)}{\operatorname{Vol}(S^{n-1})} \cdot 2n \cdot \pi^{n-1} \cdot \bigl(\frac{2}{\epsilon} \bigr)^n. \qedhere
\end{equation*}
\end{proof}
Now we can give a proof of our main result, establishing an upper bound for $\kappa(M,g,\epsilon)$ in case that $(M,g)$ is a compact Riemannian manifold.

%Now we put it all together to find a bound on $\kappa( M, g, \epsilon)$ for a general closed Riemannian manifold $(M,g)$. Let us define
%\begin{equation*}
%K(M,g) := \operatorname{inf} \bigl \{ r \geq \frac{1}{100} : r \geq \operatorname{sec}(g) \bigr \}.
%\end{equation*} 
%Now we can state our main theorem:
%\begin{theo}
%Let $(M,g)$ be a closed Riemannian manifold of dimension $n$. Let $\epsilon \in (0, \frac{4}{5 \pi})$. Then 
%\begin{equation*}
%\kappa(M,g,\epsilon) \leq \frac{\operatorname{Vol}(M,g) }{ \operatorname{Vol}(S^{n-1}) } \cdot 2n \cdot \pi^{n-1} \cdot K(M,g)^{\frac{n}{2}} \cdot \bigl( \frac{20}{\epsilon})^n.
%\end{equation*}
%\end{theo}
\begin{proof}[Proof of Theorem 1.1]
The idea is to scale the metric $g$ so that the scaled metric satisfies suitable curvature bounds. Let 
\begin{equation*}
t = t(M,g) := 10 \cdot \sqrt{ K(M,g) }.
\end{equation*}
Then set $\overline{g} := t^2 \cdot g$. It follows that
\begin{equation*}
\operatorname{sec}(\overline{g} ) = t^{-2} \cdot \operatorname{sec}(g)  \leq  \frac{1}{100},
\end{equation*}
i.e., Lemma 3.2 is applicable to $\overline{g}$. 
Now, since the inequality 
\begin{equation*}
(1-\epsilon) \cdot d^g(x,y) \leq | d^g(x,s) - d^g(y,s) | 
\end{equation*}
for $x,y,s \in M$ is invariant under scaling of $g$, it follows from Lemma 3.1 that there is an $\epsilon$-good net $S$ in $(M,g)$, and we can use Lemma 3.2 to deduce
\begin{align*}
\kappa(M,g,\epsilon) &= \kappa(M,\overline{g} ,\epsilon) \leq \frac{ \operatorname{Vol}(M, \overline{g}) }{ \operatorname{Vol}(S^{n-1}) } \cdot 2n \cdot \pi^{n-1} \cdot \bigl(\frac{2}{\epsilon})^n \\ &= \frac{\operatorname{Vol}(M,g) }{ \operatorname{Vol}(S^{n-1}) } \cdot 2n \cdot \pi^{n-1} \cdot 20^n \cdot K(M,g)^{\frac{n}{2}} \cdot \bigl( \frac{1}{\epsilon})^n. 
\end{align*}
Now use $\operatorname{Vol}(S^{n-1}) = 2\pi^{\frac{n-1}{2}} / \Gamma( \frac{n-1}{2})$ to finish the proof.
\end{proof}

\bibliography{cobordismthm}{}
\bibliographystyle{plain}

\end{document}